\tikzset{%
  semithick,
  >={Stealth[width=1.5mm,length=2mm]},
  obs1/.style = {name = #1, circle, inner sep = 8pt, label = center:$#1$},
  obs/.style 2 args = {name = #1, circle, inner sep = 8pt, label = center:$#2$}
}
\theoremstyle{plain}
\newtheorem{theorem}{Theorem}[section]
\newtheorem{corollary}[theorem]{Corollary}
\theoremstyle{definition}
\newtheorem{definition}[theorem]{Definition}
\newcommand{\cond}{\,\vert\,}
\newcommand{\E}{\textrm{E}}
\newcommand{\sd}{\textrm{sd}}
\newcommand{\Cor}{\textrm{Cor}}
\newcommand{\eqs}{\!=\!}
\newcommand\indep{\protect\mathpalette{\protect\independenT}{\perp}}
\def\independenT#1#2{\mathrel{\rlap{$#1#2$}\mkern2mu{#1#2}}}
\DeclareMathOperator{\paOP}{pa}
\newcommand{\pa}[1][]{%
\ifthenelse{ \equal{#1}{} }
{\paOP}
{\paOP_{#1}}
}
\DeclareMathOperator{\chOP}{ch}
\newcommand{\ch}[1][]{%
\ifthenelse{ \equal{#1}{} }
{\chOP}
{\chOP_{#1}}
}
\DeclareMathOperator{\deOP}{de}
\newcommand{\de}[1][]{%
\ifthenelse{ \equal{#1}{} }
{\deOP}
{\deOP_{#1}}
}
\DeclareMathOperator{\anOP}{an}
\newcommand{\an}[1][]{%
\ifthenelse{ \equal{#1}{} }
{\anOP}
{\anOP_{#1}}
}
\newcommand{\+}[1]{\ensuremath{\mathbf{#1}}}
\newcommand{\sG}{\mathcal{G}}
\newcommand{\sM}{\mathcal{M}}
\newcommand{\ee}{\!=\!}
\title{Multiple Imputation Guided by Full Law and Target Law Identifiability}
\author{
\name Juha Karvanen \email juha.t.karvanen@jyu.fi \\
      \addr Department of Mathematics and Statistics\\
      University of Jyvaskyla, Finland
      \AND
\name Santtu Tikka \email santtu.tikka@jyu.fi \\
      \addr Department of Mathematics and Statistics\\
      University of Jyvaskyla, Finland
}
\begin{document}
\maketitle

\begin{abstract}
The central challenges in missing data models concern the identifiability of two distributions: the target law and the full law. The target law refers to the joint distribution of the data variables, whereas the full law refers to the joint distribution of the data variables and their corresponding response indicators. However, the relationship between the identifiability of these two distributions and the feasibility of multiple imputation has not been clearly established. We present a procedure where the choice of the imputation method is guided by identifiability considerations. We show that imputations can be drawn from the correct conditional distributions for all possible missing data patterns if and only if the full law is identifiable. This result implies that standard multiple imputation methods---which keep observed values unchanged and replace missing values with imputed values---are invalid when the target law is identifiable but the full law is not. We demonstrate that alternative imputation strategies can sometimes enable the estimation of the target law in such cases. Specifically, we introduce factorizable imputation where certain observed values are also imputed and the imputed data are weighted in the analysis. 
\end{abstract}

\section{Introduction} \label{sec:intro}
Multiple imputation is a popular tool for practical data analysis with missing data \citep{vanbuuren2018flexible}. A standard imputation method keeps observed values unchanged and replaces all missing values with imputed values producing many imputed versions of the full data. The theoretical basis of multiple imputation \citep{rubin1987multiple} has been built on the missing at random (MAR) assumption \citep{rubin1976inference,little2019statistical,seaman2013meant}.  This assumption is not strictly necessary and multiple imputation has been applied also in some cases where the data are missing not at random (MNAR) \citep{galimard2016multiple, hammon2020multiple, beesley2021multiple, munoz2024multiple}.

Alongside the MAR assumption, there has been growing interest in missing data literature toward more detailed nonparametric assumptions and their implications for identifiability, also known as recoverability. These assumptions are often represented by graphical missing data models \citep{doi:10.1177/0962280210394469, NIPS2013_0ff8033c,Karvanen2015studydesign}. The identifiability of the full law, i.e., the joint distribution of data variables and response indicators, and the identifiability of the target law, i.e., the joint distribution of data variables, have been among the main questions of interest \citep[e.g.,][]{pmlr-v77-tian17a,pmlr-v115-bhattacharya20b, Mohan_2021, pmlr-v216-guo23a, tikka2024monotone}. The full law and the target law are identifiable also in some cases that are classified as MNAR. Currently, the sufficient and necessary conditions for the nonparametric identifiability are known for the full law  \citep{pmlr-v119-nabi20a}, but not for the target law.

Multiple imputation and identifiability of the full law have not been explicitly linked in the literature. \citet{mathur2024imputation} give a graphical condition for an imputation model trained on complete cases to be valid. However, imputation methods such as multiple imputation by chained equations \citep{vanbuuren2011mice} use also partially observed cases, which extends the applicability of imputation. 

In this paper, we study the consequences of the full law and the target law identifiability for multiple imputation. First we show that imputations can be drawn from the correct conditional distributions for all possible missing data patterns if and only if the full law is identifiable. This result is intuitive but has not been presented in the literature. For the purposes of this paper, we define a conditionally complete imputation method as a procedure that produces samples from the correct conditional distributions. We consider only correctly specified imputation models and ignore the challenges related to their estimation. The link between multiple imputation and the identifiability of the full law implies that multiple imputation is applicable also in some MNAR cases. The result means that we can evaluate the potential of standard multiple imputation in a MNAR case by checking the sufficient and necessary conditions \citep{pmlr-v119-nabi20a} for nonparametric identifiability of the full law.

There are also cases where the full law is not identifiable, and consequently, the standard application of multiple imputation leads to biased estimates. We show that in such cases, alternative non-standard imputation strategies can sometimes be valid, for example when the target law is identifiable. When the target law factorizes into identifiable components, we show that a non-standard imputation method which we call factorizable imputation yields valid imputations. The factorization mandates two non-standard practices: replacing actual observations by imputed values under certain conditions and weighting the imputed data in the analysis.

The rest of the paper is organized as follows. In Section~\ref{sec:mainresult}, the necessary notation is introduced and the results on identifiability and multiple imputation are presented. Section~\ref{sec:procedure} presents a practical procedure for multiple imputation guided by identifiability considerations. The implications of the theoretical results are demonstrated via examples in Section~\ref{sec:examples}. Section~\ref{sec:conclusion} concludes the paper.

\section{Theoretical results} \label{sec:mainresult}

\subsection{Notation and definitions}
Let $\+ X$ denote the set of data variables that may be missing (called partially observed variables). Under missing data, we do not observe these variables directly. Instead, we observe a set of proxy variables $\+ X^\ast$ and their response indicators $\+ R$. There may also be variables that are fully observed, denoted by $\+ O$. The proxy variables $X^\ast \in \+ X^\ast$ are defined as follows:
\begin{equation} \label{eq:missingness_mechanism}
  X^\ast = \begin{cases}
    X & \text{if } R_X = 1, \\
    \text{NA} & \text{if } R_X = 0,
  \end{cases}
\end{equation}
where a missing value is indicated by \text{NA} (not available). In other words, the proxies represent the true values of the partially observed variables when they are not missing ($R_X = 1$), and the value is missing otherwise ($R_X = 0$). We generalize the notation of response indicators to sets $\+ Y \subseteq \+ X$ as follows. $R_{\+ Y} = 1$ denotes that all individual response indicators $R_Y$ such that $Y \in \+ Y$ have the value assignment $R_Y = 1$. The notation $R_{\+ Y} = 0$ is analogous.

A set of distributions over a set of random variables $\+ V$ that can be partitioned into the sets $\+ X$, $\+ X^\ast$, $\+ R$, and $\+ O$ is called a missing data model. When considering a particular value assignment of $\+ r$ to $\+ R$, we further partition $\+ X$ into the sets $\+X_1$ and $\+X_0$, where variables $\+X_1$ are observed and variables $\+X_0$ are missing under this particular value assignment, respectively. In other words, when $\+ r$ is fixed, we have that $R_{X} = 1$ for all $X \in \+X_1$, and $R_X = 0$ for all $X \in \+X_0$.

In missing data models, the joint distribution of $\+ V$ factorizes as follows 
\[
  p(\+ V) = p(\+ O, \+ X^\ast, \+ X, \+ R) = p(\+ X^\ast|\+ X, \+ R)p(\+ O, \+ X, \+ R),
\]
where $p(\+ O, \+ X, \+ R)$ is the \emph{full law}, and $p(\+ X^\ast|\+ X, \+ R)$ represents the deterministic component related to \eqref{eq:missingness_mechanism}. The full law is characterized by the \emph{target law} $p(\+ O, \+ X)$ and the \emph{missingness mechanism} $p(\+ R|\+ O, \+ X)$. Finally, the distribution $p(\+ O, \+ X^\ast, \+ R)$ that is available for practical analyses is called the \emph{observed data law}.

A nonparametric missing data model $\sM_\Omega$ is a missing data model whose members share a description $\Omega$. The description is a set of nonparametric assumptions about the missing data model, such as (counterfactual) conditional independence restrictions for variables $\+V$. These restrictions can be listed directly or they can be described by a graphical model, for example. Missing completely at random (MCAR) is an example of a description $\Omega$ that defines a nonparametric missing data model $\sM_\Omega$.

We focus on graphical missing data models depicted by directed acyclic graphs (DAGs) \citep{doi:10.1177/0962280210394469,NIPS2013_0ff8033c,Karvanen2015studydesign}. In other words, the description $\Omega$ encodes a missing data model such that the joint distribution factorizes according to the structure of the DAG, i.e.,
\[
  p(\+ V) = \prod_{V_i \in \+ V} p(V_i \cond \pa[\sG](V_i)),
\]
where $\sG$ is a DAG and $\pa[\sG](V_i)$ denotes the parents of $V_i$ in $\sG$. We also assume positivity for the full law, i.e., $p(\+ O = \+ o, \+ X = \+ x, \+ R = \+ r) > 0$ for all value assignments $(\+ o, \+ x, \+ r)$ to $(\+ O, \+ X, \+ R)$. This means that there are no deterministic relationships between the variables in the missing data model, except those related to the definition of proxy variables in equation~\eqref{eq:missingness_mechanism}. Conditional independence restrictions implied by the DAG can be derived using d-separation \citep{pearl_1995}.
To simplify the presentation, we assume in the rest of the paper  that all variables may have missing values, i.e.,  $\+O=\emptyset$. Accordingly, $p(\+X,\+R)$ will stand for the full law, $p(\+X)$ for the target law, and $p(\+X^*,\+ R)$ for the observed data law. 

We define a general notion of identifiability that is not specific to any particular description $\Omega$ of a nonparametric missing data model. 
\begin{definition} 
An estimand or a probabilistic query $Q$ is said to be \emph{identifiable} from $(\Omega, p( \+ X^\ast, \+ R))$ if there exists a functional $f$ such that $Q = f(p( \+ X^\ast, \+ R))$ for every element of a nonparametric missing data model $\sM_\Omega$. 
\end{definition}
\noindent In this paper we focus on the identifiability of the target law and the full law meaning that our queries of interest are $Q = p(\+ X)$ and $Q = p(\+ X, \+ R)$, respectively.

An imputation method turns an incomplete dataset into a complete dataset \citep[chapter~2.3.3]{vanbuuren2018flexible}. We characterize this notion in the following definition. 
\begin{definition} \label{def:imputationmethod}
  Let $\sM_\Omega$ be a nonparametric missing data model with observed data law $p( \+ X^\ast, \+ R)$. An \emph{imputation method} is a collection of stochastic kernels, called \emph{imputation kernels}
  \[
    \xi_{\Omega, p(\+ X^\ast, \+ R)} = \left\{ \xi_{\+ r}(\+ x_0 \cond \+ x_1) : \+ r \in \{0, 1\}^{|\+ X|} \right\},
  \]
  where, for each missingness pattern $\+ r$, the variables are partitioned into observed variables $\+ X_1$ and missing variables $\+ X_0$. The kernel $\xi_{\+ r}(\+ x_0 \cond \+ x_1)$ is a conditional distribution over the missing variables $\+ X_0$, given the observed variables $\+ X_1 = \+ x_1$, whose form is determined only by $\Omega$ and the observed data law $p(\+ X^\ast, \+ R)$.
\end{definition}
In practice, imputations are drawn from the imputation kernels.
Naturally, not all imputation methods correspond with the true conditional distribution of $\+ X_0$ for all partitions of $\+ X$. 

\subsection{Conditionally complete imputation and the full law}
Following \citet[chapter 2.6]{rubin1987multiple}, we define a conditionally complete imputation method as follows.
\begin{definition} \label{def:imputation}
Let $\sM_\Omega$ be a nonparametric missing data model with observed data law $p(\+ X^\ast, \+ R)$. An imputation method $\xi_{\Omega, p( \+ X^\ast, \+ R)}$ is \emph{conditionally complete} if, for every missingness pattern $\+ r$, the corresponding imputation kernel satisfies
\begin{equation} \label{eq:conditionalpredictive}
\xi_{\+ r}(\+ x_0 \cond \+ x_1) = p(\+X_0 \cond \+X_1 = \+x_1, R_{\+X_1}=1, R_{\+X_0}=0),
\end{equation}
for all possible value assignments $(\+ x_0, \+ x_1)$ to $(\+ X_0, \+ X_1)$.
\end{definition}

In practice, many imputation methods estimate the conditional predictive distributions and then simulate values from the estimated models also known as imputation models.  The imputation model may contain interactions and nonlinear relationships. In MAR settings, $\+X_0$ in equation~\eqref{eq:conditionalpredictive} is independent from $\+R$ but in MNAR settings the response indicators are usually needed as predictors. We define standard multiple imputation as a procedure where the NA values are replaced with multiple imputed values but actual observations are not altered. Removing rows of the data or changing observed values to missing values are in general considered dubious operations that lead to information loss and may cause bias. This may sound obvious, but later we will also consider non-standard imputation methods where valid imputations cannot be produced unless these principles are violated.

With the definitions above, we are ready the present the first result of the paper:
\begin{theorem} \label{thm:fulllawmi}
  Let $\sM_\Omega$ be a nonparametric missing data model. A conditionally complete imputation method $\xi_{\Omega, p( \+ X^\ast, \+ R)}$ exists if and only if the full law $p( \+X, \+ R)$ is identifiable from $(\Omega, p( \+ X^\ast, \+ R))$.
\end{theorem}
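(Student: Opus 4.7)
The plan is to prove both directions by a chain rule argument applied to each value assignment $\+r$ of $\+R$. Fix any such $\+r$ and let $(\+X_1, \+X_0)$ be the induced partition of $\+X$ with $\+X_1$ observed and $\+X_0$ missing under $\+r$; I will relate the joint $p(\+O, \+X, \+R \eqs \+r)$ both to the conditional predictive distribution in \eqref{eq:conditionalpredictive} and to a marginal of the observed data law.

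For the ``if'' direction, suppose the full law $p(\+O, \+X, \+R)$ is identifiable from $(\Omega, p(\+O, \+X^\ast, \+R))$. Then for any partition arising from some $\+r$, the conditional predictive distribution in \eqref{eq:conditionalpredictive} is a ratio of two marginals of the identified full law, and is therefore itself a functional of the observed data law. A conditionally complete imputation method $\xi$ can be obtained by drawing from this identified conditional for each pattern $\+r$, which satisfies Definition~\ref{def:imputation} by construction.

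For the ``only if'' direction, suppose a conditionally complete imputation method exists. For each $\+r$ the chain rule gives
\[
p(\+O \eqs \+o, \+X \eqs \+x, \+R \eqs \+r) = p(\+X_0 \eqs \+x_0 \cond \+X_1 \eqs \+x_1, \+O \eqs \+o, \+R \eqs \+r)\, p(\+X_1 \eqs \+x_1, \+O \eqs \+o, \+R \eqs \+r).
\]
The first factor is identified by assumption, since it is exactly the conditional predictive distribution from which $\xi$ samples. The second factor is identified from the observed data law because, under $\+R \eqs \+r$, relation \eqref{eq:missingness_mechanism} forces $\+X^\ast_1 \eqs \+X_1$ and $\+X^\ast_0 \eqs \text{NA}$, so that $p(\+X_1 \eqs \+x_1, \+O \eqs \+o, \+R \eqs \+r) = p(\+X^\ast_1 \eqs \+x_1, \+X^\ast_0 \eqs \text{NA}, \+O \eqs \+o, \+R \eqs \+r)$, which is a marginal of $p(\+O, \+X^\ast, \+R)$. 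Multiplying the two identified factors yields $p(\+O, \+X, \+R \eqs \+r)$ for every $\+r$, and hence identifies the full law.

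The main delicate point is the reverse direction: one must carefully translate between the proxies $\+X^\ast$ and the partially observed $\+X$, using the fact that fixing $\+R \eqs \+r$ deterministically determines which coordinates of $\+X^\ast$ carry the values of $\+X$ and which are NA. The boundary cases $\+X_0 \eqs \emptyset$ (no imputation needed; the conditional is a degenerate point mass) and $\+X_1 \eqs \emptyset$ (imputation of all of $\+X$ from $\+O$ alone) fit the same decomposition without separate treatment, and a minor measure-theoretic caveat is that the argument implicitly assumes $p(\+X_1 \eqs \+x_1, \+O \eqs \+o, \+R \eqs \+r) > 0$ wherever the conditional predictive is required.
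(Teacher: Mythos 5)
Your proof is correct and follows essentially the same route as the paper's own: the ``if'' direction constructs $\xi$ from the identified conditionals, and the ``only if'' direction uses the same chain-rule factorization of $p(\+O, \+X, \+R \eqs \+r)$ into the observed-data marginal $p(\+O, \+X_1, \+R \eqs \+r)$ and the conditional predictive distribution \eqref{eq:conditionalpredictive}. Your added remarks on the proxy translation via \eqref{eq:missingness_mechanism}, the boundary cases, and the positivity caveat are sensible elaborations of steps the paper leaves implicit, but they do not change the argument.
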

\begin{proof}
Assume first that $p(\+X, \+ R)$ is identifiable. Then any conditional distribution of the form of equation~\eqref{eq:conditionalpredictive} is also identifiable and the stochastic kernels required for the conditionally complete imputation method can be constructed based on the identifying functional $f(p(\+ X^\ast, \+ R)) = p(\+ X, \+ R)$. 

Assume then that a conditionally complete imputation method $\xi_{\Omega,p(\+ X^\ast, \+ R)}$ exists. The full law for the case where $\+X_1$ is observed and $\+X_0$ is missing can be then written as
\begin{align*}
&p(\+X_1, \+X_0,  R_{\+X_1}=1, R_{\+X_0}=0) 
= p(\+X_0 \cond \+X_1, R_{\+X_1}=1, R_{\+X_0}=0) 
 p( \+X_1, R_{\+X_1}=1, R_{\+X_0}=0).
\end{align*}
Now, $p( \+X_1, R_{\+X_1}=1, R_{\+X_0}=0)$ is directly identifiable from the observed data law $p( \+ X^\ast, \+ R)$, and $p(\+X_0 \cond \+X_1, R_{\+X_1}=1, R_{\+X_0}=0)$ is obtained from the imputation kernels of the conditionally complete imputation method $\xi_{\Omega, p(\+ X^\ast, \+ R)}$. Since the partition of $\+ X$ into $\+ X_0$ and $\+ X_1$ is arbitrary, it follows that the full law $p(\+X, \+ R)$ is identifiable.
\end{proof}

Theorem~\ref{thm:fulllawmi} implies that standard multiple imputation (as defined above) is a valid approach for the estimation of the full law if and only if the full law is identifiable. As the target law $p( \+X)$ and the full law $p( \+X, \+ R)$ have the same data variables and the same possible partitions of $\+X$, standard multiple imputation is a valid approach for the estimation of the target law if and only if the full law is identifiable. In other words, one should check the identifiability of the full law under the assumed model $\sM_\Omega$ before applying standard multiple imputation, and proceed only if the answer is positive.

\subsection{Factorizable imputation and the target law}

Alternative, non-standard multiple imputation approaches are sometimes valid even when the full law is not identifiable. Next, we will consider ordered variables. When an ordering is fixed and $Z$ precedes $W$ in the ordering, we write $Z \prec W$. We use subscripts within parentheses to denote variables in an ordering, e.g., $X_{(1)} \prec X_{(2)} \prec \cdots \prec X_{(n)}$. We also introduce the following notation for the set of predecessors:
\[
  \+ X_{\prec (i)} = \{X_{(j)} \in \+ X \mid X_{(j)} \prec X_{(i)}\},
\]
with the convention that $\+ X_{\prec(1)} = \emptyset$. In other words, $\+ X_{\prec (i)}$ denotes the set of variables that precede $X_{(i)}$ in the ordering. Our goal is to construct an imputation method that leverages a specific ordering of the variables in $\+ X$ to draw correct imputations. The following definition describes such a method, which we call directly factorizable imputation.

\begin{definition} \label{def:factorizable_imputation}
Let $\sM_\Omega$ be a nonparametric missing data model and let the variables in $\+ X$ be ordered as ${X_{(1)} \prec \cdots \prec X_{(n)}}$. 
An imputation method is \emph{directly factorizable} with respect to the ordering if its imputation kernels are of the form
\begin{equation} \label{eq:df_kernels}
  \xi^{\mathrm{DF}}_{\+ r}(\+ t \cond \+ h) = \prod_{j = m(\+ r)}^n p(X_{(j)} \cond \+ X_{\prec(j)}),
\end{equation}
where $m(\+ r)$ is the index of the first variable with a missing value in the ordering with respect to the missingness pattern $\+ r$, $\+ H(\+ r) = \{X_{(j)} : j < m(\+ r)\}$, $\+ T(\+ r) = \{X_{(j)} : j \geq m(\+ r)\}$, and $\+ h$ denotes the values of $\+ H(\+ r)$ and $\+ t$ denotes the values of $\+ T(\+ r)$.
\end{definition}

Note that a directly factorizable imputation method may impute $X_{(k)}$ even if $R_{X_{(k)}} = 1$ 
in cases where $R_{X_{(j)}} = 0$ for some $1 < j < k$. In practice, the observed value is then replaced by the imputed value; an operation that is not part of standard multiple imputation. In other words, the missingness pattern is forced to be monotone. This practice may sound counter-intuitive as it leads to loss of information, but it is required to ensure that the imputations are drawn from the correct conditional distributions. However, we note that the loss of information is not as substantial as in complete case analysis, because information up to the first missing value is still available for analysis. The next theorem provides a sufficient criterion for the existence  of a directly factorizable imputation method.

\begin{theorem} \label{thm:factorizable_valid}
Let $\sM_\Omega$ be a nonparametric missing data model. If there exists an ordering ${X_{(1)} \prec \cdots \prec X_{(n)}}$ of the variables in $\+X$ such that $X_{(1)} \indep R_{(1)}$ and ${X_{(j)} \indep R_{X_{(j)}}, R_{ \+ X_{\prec (j)}} \cond \+ X_{\prec (j)}}$ for all $j = 2,\ldots,n$, then a directly factorizable imputation method exists.
\end{theorem}
\begin{proof}
It follows from the assumed conditional independence restrictions that any distribution of the form $p(X_{(j)} \cond \+ X_{\prec(j)})$ is identifiable from the observed data law. Thus the imputation kernels defined in equation~\eqref{eq:df_kernels} can be constructed based on the identifying functional.
\end{proof}
In practice, finding a suitable ordering required by Theorem~\ref{thm:factorizable_valid} can be a challenge for large models because the number of possible orderings is the factorial of $n$. To alleviate this, we note that the target law is identifiable under the conditions of Theorem~\ref{thm:factorizable_valid}; a special case of identification via an admissible factorization presented by \citet{NIPS2013_0ff8033c}, and thus their results can be used to reduce the search space of possible orderings in some cases. 

\begin{corollary} \label{cor:tagerlaw_id}
  Let $\sM_\Omega$ be a nonparametric missing data model. The target law $p( \+ X)$ is identifiable under the conditions of Theorem~\ref{thm:factorizable_valid}.
\end{corollary}
\begin{proof}
Identifiability of the target law follows directly from the assumed conditional independence restrictions as follows 
\[
  \begin{aligned}
  p( \+X) &= \prod_{j = 1}^n p(X_{(j)} \cond \+ X_{\prec (j)}) \\
         &= \prod_{j = 1}^n p(X_{(j)} \cond \+ X_{\prec (j)}, R_{X_{(j)}} = 1, R_{\+ X_{\prec (j)}} = 1) \\
         &=\prod_{j = 1}^n p(X^*_{(j)} \cond \+ X^*_{\prec (j)}, R_{X_{(j)}} = 1, R_{\+ X_{\prec (j)}} = 1)
  \end{aligned}
\]
Each term in the final right-hand side expression is directly identifiable from the observed data distribution $p( \+ X^*, \+ R)$.
\end{proof}

Factorizable imputation can be extended to scenarios where the joint distribution of a subset of variables is identifiable by other means. We define \emph{conditionally factorizable imputation} as follows.

\begin{definition} \label{def:conditionally_factorizable_imputation}
Let $\sM_\Omega$ be a nonparametric missing data model and let $(\+A,\+B)$ be a partition of $\+ X$. Suppose the variables in $\+ B$ are ordered as $B_{(1)} \prec \cdots \prec B_{(k)}$. A \emph{conditionally factorizable imputation method} for the partition $(\+ A, \+ B)$ consists of:
\begin{enumerate}
  \item restriction to the subpopulation with $R_{\+ A} = 1$;
  \item a collection of stochastic kernels $\xi^{\mathrm{CF}}_{\+ r}(\+ t_{\+ B} \cond \+ a, \+ h_{\+ B})$ defined for missingness patterns $\+ r$ satisfying $\+ r_{\+ A} = 1$, where
  \[
  \xi^{\mathrm{CF}}_{\+ r}(\+ t_{\+ B} \cond \+ a, \+ h_{\+ B}) =
    \prod_{j = m_{\+ B}(\+ r)}^k p(B_{(j)} \cond \+ A = \+ a, \+ B_{\prec(j)}),
  \]
  and $m_{\+ B}(\+ r)$, $\+ T_B(\+r)$, $\+ H_{\+ B}$, $\+ t_{\+ B}$ and $\+ h_{\+ B}$ are defined analogously to Definition~\ref{def:factorizable_imputation};
  \item a weight function
  \[
    w(\+ a) = c \; p(R_{\+ A} = 1 \cond \+ A = \+ a)^{-1},
  \]
  for an arbitrary normalizing constant $c$.
\end{enumerate}
\end{definition}

The next theorem provides sufficient criteria for the existence of a conditionally factorizable imputation method.
\begin{theorem} \label{thm:factorizable_extended}
The target law is identifiable and there exists a conditionally factorizable imputation method for a partition $(\+A,\+B)$ of $\+ X$ if the following conditions hold
\begin{enumerate}[label=(\alph*)]
\item $\+B \indep R_{\+ A} \cond \+A$, 
\item $p(R_{\+A}=1 \cond \+A)$ is identifiable from $p(\+A^*, R_{\+A})$, and
\item there exists an ordering $B_{(1)} \prec B_{(2)} \prec \cdots \prec B_{(k)}$ of $\+B$ for which $B_{(1)} \indep R_{B_{(1)}} \cond \+A, R_{\+A}$ 
and $B_{(j)} \indep R_{B_{(j)}},  R_{ \+ B_{\prec (j)}} \cond \+ B_{\prec (j)},\+A, R_{\+A}$,
\end{enumerate}
\end{theorem}
\begin{proof}
The target law factorizes as $p(\+X) = p(\+A)p(\+B \cond \+A)$. The identifiability of $p(\+A)$ follows directly from condition (b), which assumes the identifiability of $p(R_{\+A}=1 \cond \+A)$, because
\[
p(\+A) = \frac{p(R_{\+ A}=1)p(\+A \cond R_{\+A}=1)}{p(R_{\+A}=1 \cond \+A)}.
\]
In other words, complete cases $p(\+A \cond R_{\+A}=1)$ are weighted by $p(R_{\+ A} = 1)p(R_{\+A}=1 \cond \+A)^{-1}$.
The directly factorizable imputation is applied for the data where $\+A$ is fully observed. The validity of the imputation follows from Theorem~\ref{thm:factorizable_valid} and condition (c) when we consider an ordering where $\+A$ precedes variables $B_{(1)} \prec B_{(2)} \prec \cdots \prec B_{(k)}$. The imputations are from distribution $p(\+B \cond \+A, R_{\+A}=1)$ which by condition (a) equals $p(\+B \cond \+A)$. Thus, both components of the factorization $p(\+A)p(\+B \cond \+A)$ are identifiable.
\end{proof}

Theorem~\ref{thm:factorizable_valid} is a special case of Theorem~\ref{thm:factorizable_extended} when $\+A=\emptyset$. 
Theorem~\ref{thm:factorizable_extended} requires $p(R_{\+A}=1 \cond \+A)$ to be identifiable from $p(\+A^*, R_{\+A})$ but does not specify the way how the identification should be achieved. A practical approach will be available if $p(\+A,R_{\+A})$ is identifiable from $p(\+A^*,R_{\+A})$. Using reasoning analogous to Theorem~\ref{thm:fulllawmi}, we may conclude that conditionally complete imputation can be applied to variables $\+A$. Then the weights $p(R_{\+A}=1 \cond \+A)^{-1}$ can be estimated from the imputed data for $p(\+A,R_{\+A})$. 

Conditionally factorizable imputation may be applicable in some longitudinal settings where $\+A$ represents baseline measurements and $\+B$ represents repeated measurements in later time points. 
The missingness of $\+B$ may be affected by $\+A$ and $R_{\+A}$. However, it is natural to assume that the missingness of $\+A$ does not depend on future measurements $\+B$, which fulfills the condition~(a) of Theorem~\ref{thm:factorizable_extended}. 
If in addition, $p(R_{\+A}=1 \cond \+A)$ can be identified from the baseline data and the repeated measurements $\+B$ ordered in time fulfill condition~(c) of Theorem~\ref{thm:factorizable_extended}, a conditionally factorizable imputation method can be applied.

\section{Practical procedure} \label{sec:procedure}

In this section, we show how the theoretical results of Section~\ref{sec:mainresult} can be applied the in practical data analysis. As a starting point, we have a graphical missing data model and a random sample from the distribution $p(\+X^\ast,\+R)$ available. Although Theorems~\ref{thm:fulllawmi} and \ref{thm:factorizable_extended} are independent of each other, it seems reasonable to first check the identifiability of the full law. The overall procedure can be presented as follows:
 
\begin{enumerate}
\item If the full law is identifiable, apply conditionally complete multiple imputation. 
\item  If the conditions of Theorem~\ref{thm:factorizable_valid} hold for some ordering, apply factorizable imputation with this ordering. 
\item  If the conditions of Theorem~\ref{thm:factorizable_extended} hold for some partition and some ordering, apply factorizable imputation with this partition and this ordering. 
\item Use Do-search or apply do-calculus manually to check if the target law is identifiable. If yes, estimate the target law using the obtained identifying functional. 
\item If the target law is not identifiable, it can be still checked if some marginal distribution or a joint distribution of a subset of variables can be identified. Respecify the problem and start again from Step~1.
\end{enumerate}

Next, we will discuss each step of the procedure. Checking the identifiability of the full law in Step~1 requires examining relatively simple graphical conditions. When unmeasured confounders are not present, the full law is identifiable if and only if the graph does not contain self-censoring edges or  colluders \citep[Theorem~1]{pmlr-v119-nabi20a}. A colluder is a specific type of collider structure, where a partially observed variable and its response indicators have an outgoing edge into the response indicator of another partially observed variable, e.g., $X \rightarrow R_Y \leftarrow R_X$. A self-censoring edge is an edge from a partially observed variable into its corresponding response indicator, e.g., $X \rightarrow R_X$. With unmeasured confounders, the full law is identifiable if and only  if the graph does not contain colluding paths \citep[Theorem~4]{pmlr-v119-nabi20a}.

The implementation of conditionally complete imputation requires some attention. As can be seen from equation~\eqref{eq:conditionalpredictive}, the imputation model should include the response indicators \citep{tompsett2018ontheuse,beesley2021multiple}. However, if the data are MAR, the response indicators are not needed. In software packages for multiple imputation, response indicators are usually not included by default and must be manually specified in the imputation model. In addition, the user has to choose the imputation method (linear model, predictive mean matching, random forest, etc.), the number of imputations and other details. Multiple imputation by chained equations also requires specifying the number of iterations needed to guarantee the convergence of the parameters of the imputation models.

In Steps~2 and 3, the key requirement is finding a suitable of ordering of the variables. While this is straightforward in a small graph, it can become challenging when dealing with a large graph. Factorizable imputation needs a preprocessing phase where the value of a variable is set as missing if any of its preceding variables has a missing value. Due to the preprocessing and the specified ordering, the covariates of the imputation models are always available and no iterations are needed. Response indicators are not used in the imputation models in factorizable imputation. Step~3 involves weights that must be estimated from the data. Standard techniques for weight / propensity score estimation are  applicable here.

Step~4 emphasizes that in some cases the target law is identifiable but neither conditionally complete multiple imputation nor factorizable imputation is applicable. In such cases, the estimation method need to be tailored case by case. Finally, Step~5 remarks that even if the target law was not identified in the previous steps, it might be possible to identify the distribution of some subset of variables.

\section{Examples} \label{sec:examples}

To illustrate the application of the theoretical results of Section~\ref{sec:mainresult}, we consider multiple imputation when the data are MNAR. We investigate the identifiability of the full law and the target law in our example cases and use Theorems~\ref{thm:fulllawmi}, \ref{thm:factorizable_valid} and \ref{thm:factorizable_extended} to make conclusions about the applicability of multiple imputation. These conclusions are verified by simulation studies where the aim is to compare the methods for the estimation of basic summary statistics. The codes for the simulation examples are available in the supplementary material.

The estimation methods in the comparison include three versions of multiple imputation, complete case analysis (CCA), available case analysis (ACA) and estimation using the identifying functionals. The imputation methods are multiple imputation without response indicators (MI), multiple imputation with response indicators (MIRI) and factorizable multiple imputation (FMI). In all imputation methods, we apply multiple imputation by chained equations using the R package \texttt{mice} \citep{vanbuuren2011mice} with Bayesian linear regression (method ''norm``). The imputation models are specified manually for MIRI and FMI. In FMI, some observations are replaced by missing values before the imputation to transform the data into the required monotonic missing data pattern. 

\subsection{Examples with two variables}
We consider six graphs depicted in Figure~\ref{fig:2vars}. All graphs represent MNAR scenarios because the missingness of $Y$ depends on $X$ that can be missing as well. We study the identification and estimation of the full law $p(X,Y,R_X,R_Y)$ and the target law $p(X,Y)$ in these scenarios. 

\begin{figure}[ht]
  \centering
  \begin{subfigure}{0.3\columnwidth}
    \centering
    \begin{tikzpicture}[scale=1.5]
    \node [obs = {X}{X}] at (0,1) {};
    \node [obs = {Y}{Y}] at (1,1) {};
    \node [obs1 = {R_X}] at (0,0) {};
    \node [obs1 = {R_Y}] at (1,0) {};
    \draw[->] (X) -- (Y);
    \draw[->] (X) -- (R_Y);
    \end{tikzpicture}
  \caption{} 
  \label{fig:nocolluder}
  \end{subfigure}
  \begin{subfigure}{0.3\columnwidth}
    \centering
    \begin{tikzpicture}[scale=1.5]
    \node [obs = {X}{X}] at (0,1) {};
    \node [obs = {Y}{Y}] at (1,1) {};
    \node [obs1 = {R_X}] at (0,0) {};
    \node [obs1 = {R_Y}] at (1,0) {};
    \draw[->] (X) -- (Y);
    \draw[->] (X) -- (R_Y);
    \draw[->] (R_Y) -- (R_X);
    \end{tikzpicture}
  \caption{} 
  \label{fig:ancestorX}
  \end{subfigure}
  \begin{subfigure}{0.3\columnwidth}
    \centering
    \begin{tikzpicture}[scale=1.5]
    \node [obs = {X}{X}] at (0,1) {};
    \node [obs = {Y}{Y}] at (1,1) {};
    \node [obs1 = {R_X}] at (0,0) {};
    \node [obs1 = {R_Y}] at (1,0) {};
    \draw[->] (X) -- (Y);
    \draw[->] (X) -- (R_Y);
	  \draw[->] (Y) -- (R_X);
    \end{tikzpicture}
  \caption{} 
  \label{fig:ORcase}
  \end{subfigure}

  \begin{subfigure}{0.3\columnwidth}
    \centering
    \begin{tikzpicture}[scale=1.5]
    \node [obs = {X}{X}] at (0,1) {};
    \node [obs = {Y}{Y}] at (1,1) {};
    \node [obs1 = {R_X}] at (0,0) {};
    \node [obs1 = {R_Y}] at (1,0) {};
    \draw[->] (X) -- (Y);
    \draw[->] (X) -- (R_Y);
    \draw[->] (R_X) -- (R_Y);
    \end{tikzpicture}
  \caption{} 
  \label{fig:colluder}
  \end{subfigure}
  \begin{subfigure}{0.3\columnwidth}
    \centering
    \begin{tikzpicture}[scale=1.5]
    \node [obs = {X}{X}] at (0,1) {};
    \node [obs = {Y}{Y}] at (1,1) {};
    \node [obs1 = {R_X}] at (0,0) {};
    \node [obs1 = {R_Y}] at (1,0) {};
    \draw[->] (X) -- (Y);
    \draw[->] (X) -- (R_Y);
	  \draw[->] (Y) -- (R_X);
	  \draw[->] (R_X) -- (R_Y);
    \end{tikzpicture}
  \caption{} 
  \label{fig:nonid}
  \end{subfigure}
  \begin{subfigure}{0.3\columnwidth}
    \centering
    \begin{tikzpicture}[scale=1.5]
    \node [obs = {X}{X}] at (0,1) {};
    \node [obs = {Y}{Y}] at (1,1) {};
    \node [obs1 = {R_X}] at (0,0) {};
    \node [obs1 = {R_Y}] at (1,0) {};
    \draw[->] (X) -- (R_Y);
	  \draw[->] (Y) -- (R_X);
	  \draw[->] (R_X) -- (R_Y);
    \end{tikzpicture}
  \caption{} 
  \label{fig:XYindependent}
  \end{subfigure}
  \caption{Graphs used for the two-variable examples. The full law and the target law are identifiable in graphs (a), (b) and (c). The target law is identifiable also in graphs (d) and (f) but not in graph (e). Proxy variables and edges related to them are not shown for simplicity.} 
  \label{fig:2vars}
\end{figure}

First we consider the identifiability of the full law $p(X,Y,R_X,R_Y)$ and the target law $p(X,Y)$ graph by graph. The identifying functionals can be derived manually or applying the Do-search algorithm \citep{dosearch}, for example. The results are summarized in Table~\ref{tab:idresults}.

\begin{table}[!htb]
  \centering
  \caption{Summary of identifiability and validity of imputation for the graphs of Figure~\ref{fig:2vars}.} \label{tab:idresults}
  \begin{tabular}{lcccccc}
    \toprule
    & \multicolumn{6}{c}{Graph} \\
    & 1a & 1b & 1c & 1d & 1e & 1f \\
    \midrule
    Full law identifiable & \checkmark & \checkmark & \checkmark & $\times$ & $\times$ & $\times$ \\
    Target law identifiable & \checkmark & \checkmark & \checkmark & \checkmark & $\times$ & \checkmark \\
    Theorem~\ref{thm:fulllawmi} applies & \checkmark & \checkmark & \checkmark & $\times$ & $\times$ & $\times$ \\
    Theorem~\ref{thm:factorizable_valid} applies & \checkmark & $\times$ & $\times$ & \checkmark & $\times$ & $\times$ \\
    \bottomrule
  \end{tabular}
\end{table}

In the graph of Figure~\ref{fig:nocolluder}, the full law is identified by the formula
\[
  \begin{aligned}
    &p(X,Y,R_X,R_Y)  
    = p(Y^\ast \cond X^\ast,R_Y=1,R_X=1)p(R_Y \cond X^\ast,R_X=1) 
    p(X^\ast \cond R_X=1)p(R_X).
  \end{aligned}
\]
thus a conditionally complete imputation method exists. The conditional distributions for imputation are identified using the full law as follows
\begin{equation} \label{eq:exampleconditional}
  \begin{aligned}
  p(X \cond Y,R_X \ee 0,R_Y \ee 1) &\ee \frac{p(X,Y,R_X \ee 0,R_Y \ee1)}{\sum\limits_X p(X,Y,R_X \ee  0,R_Y \ee 1)}, \\
  p(Y \cond X,R_X \ee 1,R_Y \ee 0) &\ee \frac{p(Y,X,R_X \ee 1,R_Y \ee 0)}{\sum\limits_Y p(Y,X,R_X \ee 1,R_Y \ee 0)}, \\
  p(X,Y \cond R_X \ee 0,R_Y \ee 0) &\ee \frac{p(Y,X,R_X \ee 0,R_Y \ee 0)}{\sum\limits_{X,Y} p(Y,X,R_X \ee 0,R_Y \ee 0)},
  \end{aligned}
\end{equation}
where the summations should be understood as integrals when $X$ and $Y$ are continuous variables. The target law is identified by the formula
\begin{equation} \label{eq:targetlaw_nocolluder}
  p(X,Y) = p(Y^\ast \cond X^\ast, R_X = 1, R_Y = 1) p(X^\ast \cond R_X = 1),
\end{equation}
which can be seen by noting that $Y \indep R_Y \cond X$ and $X \indep R_X$. The conditions of Theorem~\ref{thm:factorizable_valid} are thus satisfied by the ordering $X \prec Y$ which implies that a factorizable imputation method also exists. Applying Corollary~2 by \citet{Mohan_2021}, we conclude that neither CCA nor ACA is a valid estimation method for the target law. We also see that the m-backdoor criterion \citep{mathur2024imputation} is not applicable in this case. These conclusions on the applicability of CCA, ACA and the m-backdoor criterion apply also to the other graphs in Figure~\ref{fig:2vars}.

The graph of Figure~\ref{fig:ancestorX} has an additional edge $R_Y \rightarrow R_X$ compared to Figure~\ref{fig:nocolluder}. The full law is still identifiable and the identifying functional is
\begin{equation}
  \begin{aligned}
    &p(X,Y,R_X,R_Y) 
    = p(Y^\ast \cond X^\ast,R_X = 1,R_Y = 1)p(X^\ast,R_X = 1) 
    \frac{p(R_Y \cond X^\ast,R_X = 1) p(R_X|R_Y)}{p(R_X = 1 \cond R_Y)}.
  \end{aligned}
\end{equation}
Therefore, a conditionally complete imputation method exists also in this case. The target law is identified by the formula
\begin{equation} \label{eq:targetlaw_ancestorX}
  \begin{aligned}
    p(X,Y) &= p(Y^\ast \cond X^\ast, R_X = 1, R_Y = 1) 
    \sum_{R_Y,Y^\ast} p(R_Y,Y^\ast)p(X^\ast \cond R_X=1,R_Y,Y^\ast). 
  \end{aligned}
\end{equation}
As neither $X$ nor $Y$ is independent from its response indicator, the conditions of Theorem~\ref{thm:factorizable_valid} cannot be satisfied.

In the graph of Figure~\ref{fig:ORcase}, the full law is identifiable via the odds ratio factorization \citep{Yun_Chen_2007} which can be used to first identify the missingness mechanism as
\begin{equation} \label{eq:or_factorization}
  p(R_X, R_Y \cond X, Y) = \frac{\alpha(R_X,R_Y,X,Y)}{\beta(X,Y)}
\end{equation}
where
\[
  \begin{aligned}
  &\alpha(R_X,R_Y,X,Y)  
  = p(R_X \cond R_Y \eqs 1, X, Y) p(R_Y \cond R_X \eqs 1, X, Y) 
  \mathrm{OR}(R_Y, R_X \cond X, Y), \\
  &\beta(X,Y) = \sum_{r_x,r_y} \alpha(R_X = r_x, R_Y = r_y, X, Y), \\
  &\mathrm{OR}(R_Y, R_X \cond X, Y) 
  = \frac{p(R_Y \cond R_X, X, Y)}{p(R_Y = 1 \cond R_X, X, Y)} \frac{p(R_Y = 1 \cond R_X = 1, X, Y)}{p(R_Y \cond R_X = 1, X, Y)}.
  \end{aligned}
\]
All terms on the right-hand side of equation~\eqref{eq:or_factorization} are identifiable as no colluders or self-censoring edges are present. This implies the identifiability of the full law and consequently the existence of a conditionally complete imputation method. The identifying functional for the target law is
\begin{equation} \label{eq:targetlaw_ORcase}
  \begin{aligned}
    p(X,Y) &= \frac{p(R_X=1) p(X^\ast \cond R_X = 1)}{p(R_X = 1 \cond Y^\ast,R_Y = 1)} 
    p(Y^\ast \cond X^\ast,R_X = 1,R_Y = 1).
  \end{aligned}
\end{equation}
Like the previous graph, neither $X$ nor $Y$ is independent from its response indicator and the conditions of Theorem~\ref{thm:factorizable_valid} cannot be satisfied.

The graph of Figure~\ref{fig:colluder} has a colluder structure $X \rightarrow R_Y \leftarrow R_X$. We conclude that the full law is not identifiable and consequently a conditionally complete imputation method does not exist. The identifying formula for the target law is
\begin{equation} \label{eq:targetlaw_colluder}
  p(X,Y) = p(Y^\ast \cond X^\ast, R_X = 1, R_Y = 1)p(X^\ast \cond R_X = 1). 
\end{equation}
We have that $Y \indep R_Y \cond X$ and $X \indep R_X$ implying that a factorizable imputation method can be used under the ordering $X \prec Y$.

The graph of Figure~\ref{fig:nonid} exemplifies a case where neither the full law nor the target law can be identified. The non-identifiability of the full law follows again from the presence of the colluder structure $R_X \rightarrow R_Y \leftarrow X$. The non-identifiability of the target law was demonstrated in this case by \citet{nabi2023testability}. Therefore neither conditionally complete imputation nor factorizable imputation can be applied.

In the graph of Figure~\ref{fig:XYindependent}, $X$ and $Y$ are independent. The full law cannot be identified because of the colluder $X \rightarrow R_Y \leftarrow R_X$ but the target law is identified by the formula
\begin{equation} \label{eq:targetlaw_XYindependent}
  p(X,Y) = p(X^\ast \cond R_X=1) \sum_{R_X} p(R_X)(Y^\ast \cond R_Y = 1, R_X). 
\end{equation}
In this case, $X$ and $R_X$ are independent but $Y$ is not independent from $R_X$ and $R_Y$. Thus, the conditions of Theorem~\ref{thm:factorizable_valid} are not fulfilled. 

Next, we will focus on the large-sample estimation of $p(X,Y)$. For this purpose, we simulate observations from joint distributions $p(X^\ast,Y^\ast,R_X,R_Y)$ that factorize according to the graphs of Figure~\ref{fig:2vars}. The simulation examples are referred to as Example~1a, Example~1b, etc. We simulate data on $X$ and $Y$ from a bivariate normal distribution where the expectations are $0$, the variance of $X$ is $1$, the variance of $Y$ is $2$, and the correlation between $X$ and $Y$ is $\sqrt{2}/2 \approx 0.71$ with the exception of Example~\ref{fig:XYindependent} where the correlation is zero. The response indicator $R_X$ is simulated from a Bernoulli distribution where $p(R_X = 1) = 0.7$ in Examples~\ref{fig:nocolluder} and \ref{fig:colluder}, $p(R_X = 1) = 0.4 + 0.3 R_Y$ in Example~\ref{fig:ancestorX}, and $\textrm{logit}(p(R_Y = 1)) = X$ in Examples~\ref{fig:ORcase}, \ref{fig:nonid} and \ref{fig:XYindependent}. The response indicator $R_Y$ is simulated from a Bernoulli distribution where $\textrm{logit}(p(R_Y = 1)) = X$ in Examples~\ref{fig:nocolluder}, \ref{fig:ancestorX} and \ref{fig:ORcase} and from a Bernoulli distribution where $\textrm{logit}(p(R_Y=1)) = (X + 0.4) (2 R_X - 1)$ in Examples~\ref{fig:colluder}, \ref{fig:nonid} and \ref{fig:XYindependent}. In the latter case, the functional form is chosen so that $X$ and $R_X$ have a strong interaction when the graph has the colluder $X \rightarrow R_Y \leftarrow R_X$. As we are interested in the validity of imputation, we consider only large sample properties and set the sample size to one million with no repetitions because the sample variation is negligible. The observed data are a random sample from $p(X^\ast,Y^\ast,R_X,R_Y)$. 

The simulation results in Table~\ref{tab:simulation_results} are in agreement with the theoretical considerations. MIRI provides unbiased estimates in Examples~\ref{fig:nocolluder}, \ref{fig:ancestorX} and \ref{fig:ORcase} as expected and suffers from some bias in other examples. MI is always biased because the data are MNAR and therefore the imputation models cannot be correctly specified without the response indicators. FMI gives unbiased estimates in Examples~\ref{fig:nocolluder} and \ref{fig:colluder} where the ordering $X \prec Y$ fulfills the conditions of Theorem~\ref{thm:factorizable_valid} because $X \indep R_X$ and $Y \indep R_Y \cond X$. As the first step of FMI, the observed values of $Y$ are set to be missing when $R_X = 0$. FMI draws imputations in the ordering $X \prec Y$ regardless of whether Theorem~\ref{thm:factorizable_valid} applies. CCA and CCA provide biased estimates for the target law in all examples. The use of identifying functionals leads to unbiased estimates in Examples~\ref{fig:nocolluder}, \ref{fig:ancestorX}, \ref{fig:colluder}, \ref{fig:ORcase} and \ref{fig:XYindependent} where the target law is identifiable. In Example~\ref{fig:nonid}, the target law is not identifiable and an identifying functional does not exist. 

\begin{table*}[!htb]
\centering
\caption{Simulation results for the examples illustrated in Figure~\ref{fig:2vars}. The biases are reported for multiple imputation without response indicators (MI), multiple imputation with response indicators (MIRI), factorizable multiple imputation (FMI), complete case analysis (CCA), available case analysis (ACA), and estimation using the identifying functional. 
} \label{tab:simulation_results}
\begin{tabular}{clrrrrrrr}
\toprule
& & & \multicolumn{6}{c}{Bias} \\
Example & Statistic & Truth & MI & MIRI & FMI & CCA & ACA & IF  \\
\midrule
\multirow{4}{*}{1a} &  $\E(X)$  &  $0.00$  &  $0.03$  &  $0.00$  &  $0.00$  &  $0.41$  &  $0.00$  &  $0.00$ \\ 
 & $\E(Y)$  &  $0.00$  &  $0.11$  &  $-0.00$  &  $-0.00$  &  $0.41$  &  $0.41$  &  $-0.00$ \\ 
 & $\sd(X)$  &  $1.00$  &  $-0.00$  &  $-0.00$  &  $-0.00$  &  $-0.09$  &  $-0.00$  &  $-0.00$ \\ 
 & $\sd(Y)$  &  $1.41$  &  $-0.02$  &  $0.00$  &  $0.00$  &  $-0.06$  &  $-0.06$  &  $0.00$ \\ 
 & $\Cor(X,Y)$  &  $0.71$  &  $-0.01$  &  $-0.00$  &  $-0.00$  &  $-0.03$  &  $-0.03$  &  $-0.00$ \\
\midrule
\multirow{4}{*}{1b} &  $\E(X)$  &  $0.00$  &  $0.14$  &  $-0.00$  &  $0.11$  &  $0.41$  &  $0.11$  &  $-0.00$ \\ 
&  $\E(Y)$  &  $0.00$  &  $0.20$  &  $-0.00$  &  $0.11$  &  $0.41$  &  $0.41$  &  $-0.00$ \\ 
& $\sd(X)$  &  $1.00$  &  $-0.01$  &  $0.00$  &  $-0.01$  &  $-0.09$  &  $-0.01$  &  $0.00$ \\ 
&  $\sd(Y)$  &  $1.41$  &  $-0.02$  &  $0.00$  &  $-0.00$  &  $-0.06$  &  $-0.06$  &  $0.00$ \\ 
&  $\Cor(X,Y)$  &  $0.71$  &  $-0.01$  &  $-0.00$  &  $-0.00$  &  $-0.03$  &  $-0.03$  &  $-0.00$ \\
\midrule
\multirow{4}{*}{1c} &  $\E(X)$  &  $0.00$  &  $0.22$  &  $0.00$  &  $0.36$  &  $0.67$  &  $0.36$  &  $0.00$ \\ 
&  $\E(Y)$  &  $0.00$  &  $0.29$  &  $0.00$  &  $0.73$  &  $0.99$  &  $0.41$  &  $0.00$ \\ 
&  $\sd(X)$  &  $1.00$  &  $-0.06$  &  $-0.00$  &  $-0.07$  &  $-0.14$  &  $-0.07$  &  $-0.01$ \\ 
&  $\sd(Y)$  &  $1.41$  &  $-0.07$  &  $0.00$  &  $-0.19$  &  $-0.23$  &  $-0.06$  &  $-0.00$ \\ 
&  $\Cor(X,Y)$  &  $0.71$  &  $-0.01$  &  $-0.00$  &  $-0.05$  &  $-0.08$  &  $-0.08$  &  $-0.01$ \\
\midrule
\multirow{4}{*}{1d} &  $\E(X)$  &  $0.00$  &  $-0.03$  &  $-0.15$  &  $-0.00$  &  $0.35$  &  $-0.00$  &  $-0.00$ \\ 
 & $\E(Y)$  &  $0.00$  &  $-0.10$  &  $-0.29$  &  $-0.00$  &  $0.35$  &  $0.15$  &  $-0.00$ \\ 
 & $\sd(X)$  &  $1.00$  &  $0.00$  &  $0.03$  &  $0.00$  &  $-0.08$  &  $0.00$  &  $0.00$ \\ 
 & $\sd(Y)$  &  $1.41$  &  $0.01$  &  $0.07$  &  $0.00$  &  $-0.06$  &  $-0.01$  &  $0.00$ \\ 
 & $\Cor(X,Y)$  &  $0.71$  &  $0.01$  &  $0.02$  &  $0.00$  &  $-0.03$  &  $-0.03$  &  $-0.00$ \\
\midrule
\multirow{4}{*}{1e} &  $\E(X)$  &  $0.00$  &  $0.12$  &  $-0.15$  &  $0.36$  &  $0.62$  &  $0.36$  &  NA \\ 
&  $\E(Y)$  &  $0.00$  &  $0.01$  &  $-0.32$  &  $0.73$  &  $0.94$  &  $0.09$  &  NA \\ 
&  $\sd(X)$  &  $1.00$  &  $-0.03$  &  $0.06$  &  $-0.07$  &  $-0.13$  &  $-0.07$  &  NA \\ 
&  $\sd(Y)$  &  $1.41$  &  $0.10$  &  $0.19$  &  $-0.20$  &  $-0.23$  &  $0.12$  &  NA \\ 
&  $\Cor(X,Y)$  &  $0.71$  &  $0.02$  &  $0.04$  &  $-0.06$  &  $-0.08$  &  $-0.08$  &  NA \\
\midrule
\multirow{4}{*}{1f} &  $\E(X)$  &  $0.00$  &  $-0.04$  &  $-0.07$  &  $0.00$  &  $0.35$  &  $0.00$  &  $0.00$ \\ 
&  $\E(Y)$  &  $0.00$  &  $0.07$  &  $-0.00$  &  $0.72$  &  $0.72$  &  $0.12$  &  $-0.00$ \\ 
&  $\sd(X)$  &  $1.00$  &  $-0.00$  &  $0.00$  &  $-0.00$  &  $-0.09$  &  $-0.00$  &  $-0.00$ \\ 
&  $\sd(Y)$  &  $1.41$  &  $-0.00$  &  $0.00$  &  $-0.20$  &  $-0.20$  &  $-0.00$  &  $0.00$ \\ 
&  $\Cor(X,Y)$  &  $0.00$  &  $0.19$  &  $0.04$  &  $0.00$  &  $0.00$  &  $0.00$  &  $-0.00$ \\
\bottomrule
\end{tabular}
\vspace*{2cm}
\end{table*}

\subsection{Examples with four variables}
Figure~\ref{fig:4vars} presents four graphs with four variables. The full law $p(X,W,Z,Y,R_X,R_W,R_Z,R_Y)$ is not identifiable in these graphs due to the colluders: $Z \rightarrow R_W \leftarrow R_Z$ in graphs~\ref{fig:4varsfactorizes} and \ref{fig:4varsdonotfactorize}, $W \rightarrow R_Z \leftarrow R_W$ and $Y \rightarrow R_Z \leftarrow R_Y$ in graph~\ref{fig:4varsweightingmi1}, and  $W \rightarrow R_X \leftarrow R_W$ and $Z \rightarrow R_Y \leftarrow R_Z$ in graph~\ref{fig:4varsweightingmi2}. Using Theorem~\ref{thm:fulllawmi}, we conclude that a conditionally complete imputation method does not exist.

\begin{figure}[ht]
\centering
\begin{subfigure}{0.48\columnwidth}  
  \begin{center}
    \begin{tikzpicture}[scale=1.8]
    \node [obs = {X}{X}] at (0,1) {};
    \node [obs = {W}{W}] at (1,1) {};
    \node [obs = {Z}{Z}] at (2,1) {};
    \node [obs = {Y}{Y}] at (3,1) {};
    \node [obs1 = {R_X}] at (0,0) {};
    \node [obs1 = {R_W}] at (1,0) {};
    \node [obs1 = {R_Z}] at (2,0) {};
    \node [obs1 = {R_Y}] at (3,0) {};
    \draw[->, bend left=35] (X) to (Y);
    \draw[->] (X) -- (W);
    \draw[->] (X) -- (R_Y);
    \draw[->] (W) -- (Z);
    \draw[->] (W) -- (R_X);
    \draw[->] (Z) -- (Y);
    \draw[->] (Z) -- (R_W);
    \draw[->] (R_Z) -- (R_W);
    	\draw[->, bend right=35, draw=none] (R_X) to (R_Y); 
    \end{tikzpicture}
  \end{center}
  \caption{}
  \label{fig:4varsfactorizes}
  \end{subfigure}
  \begin{subfigure}{0.48\columnwidth}  
  \begin{center}
    \begin{tikzpicture}[scale=1.8]
    \node [obs = {X}{X}] at (0,1) {};
    \node [obs = {W}{W}] at (1,1) {};
    \node [obs = {Z}{Z}] at (2,1) {};
    \node [obs = {Y}{Y}] at (3,1) {};
    \node [obs1 = {R_X}] at (0,0) {};
    \node [obs1 = {R_W}] at (1,0) {};
    \node [obs1 = {R_Z}] at (2,0) {};
    \node [obs1 = {R_Y}] at (3,0) {};
    \draw[->] (X) -- (W);
    \draw[->, bend left=25] (X) to (Z);
    \draw[->, bend left=35] (X) to (Y);
	\draw[->] (W) -- (Z);
	\draw[->, bend left=25] (W) to (Y);
	\draw[->] (Z) -- (Y);
    \draw[->] (X) -- (R_Y);
    \draw[->] (X) -- (R_W);
    \draw[->] (W) -- (R_X);
    \draw[->] (W) -- (R_Z);
    \draw[->] (Y) -- (R_Z);
	\draw[->, bend right=25] (R_X) to (R_Z);  
	\draw[->, bend right=35] (R_X) to (R_Y);    
    \draw[->] (R_W) -- (R_Z);
    \draw[->] (R_Y) -- (R_Z);
    \end{tikzpicture}
  \end{center}
  \caption{}
  \label{fig:4varsweightingmi1}
  \end{subfigure}

  \begin{subfigure}{0.48\columnwidth}  
  \begin{center}
    \begin{tikzpicture}[scale=1.8]
    \node [obs = {X}{X}] at (0,1) {};
    \node [obs = {W}{W}] at (1,1) {};
    \node [obs = {Z}{Z}] at (2,1) {};
    \node [obs = {Y}{Y}] at (3,1) {};
    \node [obs1 = {R_X}] at (0,0) {};
    \node [obs1 = {R_W}] at (1,0) {};
    \node [obs1 = {R_Z}] at (2,0) {};
    \node [obs1 = {R_Y}] at (3,0) {};
    \draw[->] (X) -- (W);
    \draw[->, bend left=25] (X) to (Z);
    \draw[->, bend left=35] (X) to (Y);
	\draw[->] (W) -- (Z);
	\draw[->, bend left=25] (W) to (Y);
	\draw[->] (Z) -- (Y);
	\draw[->] (X) -- (R_Y);
    \draw[->] (W) -- (R_X);
    \draw[->] (W) -- (R_Z);
    \draw[->] (W) -- (R_Y);
    \draw[->] (Z) -- (R_X);
    \draw[->] (Z) -- (R_W);
    \draw[->] (Z) -- (R_Y);
    \draw[->, bend right=25] (R_W) to (R_Y);
    \draw[->] (R_W) -- (R_X);
    \draw[->] (R_Z) -- (R_Y);
    \draw[->, bend left=25] (R_Z) to (R_X);
    \end{tikzpicture}
  \end{center}
  \caption{}
  \label{fig:4varsweightingmi2}
  \end{subfigure}
  \begin{subfigure}{0.48\columnwidth}  
  \begin{center}
    \begin{tikzpicture}[scale=1.8]
    \node [obs = {X}{X}] at (0,1) {};
    \node [obs = {W}{W}] at (1,1) {};
    \node [obs = {Z}{Z}] at (2,1) {};
    \node [obs = {Y}{Y}] at (3,1) {};
    \node [obs1 = {R_X}] at (0,0) {};
    \node [obs1 = {R_W}] at (1,0) {};
    \node [obs1 = {R_Z}] at (2,0) {};
    \node [obs1 = {R_Y}] at (3,0) {};
    	\draw[->, bend right=45,draw=none] (R_W) to (R_Y); 
    \draw[->, bend left=35] (X) to (Y);
    \draw[->] (X) -- (W);
    \draw[->] (X) -- (R_Y);
    \draw[->] (X) -- (R_Z);
    \draw[->] (W) -- (Z);
    \draw[->] (W) -- (R_X);
    \draw[->] (Z) -- (Y);
    \draw[->] (Z) -- (R_W);
    \draw[->] (R_Z) -- (R_W);
    \end{tikzpicture}
    \caption{}
    \label{fig:4varsdonotfactorize}
  \end{center}
  \end{subfigure}
   \caption{Graphs where  the full law is not identifiable but the target law can be identified. In graph~(a), a factorizable imputation method can be used in the ordering $Z \prec W \prec X \prec Y$. Weighting with imputation can be applied for the partition $(\{X,W\},\{Z,Y\})$ in graph~(b), and for the partition $(\{W,Z\},\{X,Y\})$ in graph~(c). In graph~(d), the conditions of Theorem~\ref{thm:factorizable_valid} or Theorem~\ref{thm:factorizable_extended} are not satisfied but  the target law is still identifiable. Proxy variables and edges related to them are not shown for simplicity.} 
  \label{fig:4vars}
\end{figure}

The target law $p(X,W,Z,Y)$ is identifiable in all graphs of Figure~\ref{fig:4vars}.
In the graph of Figure~\ref{fig:4varsfactorizes} we can order the partially observed variables as $Z \prec W \prec X \prec Y$ to that satisfies the conditional independence restrictions required in Theorem~\ref{thm:factorizable_valid}, which are
\begin{align*}
Z & \indep R_Z, \\
W & \indep R_Z, R_W \cond Z, \\
X & \indep R_Z, R_W, R_X \cond Z,W, \\
Y & \indep R_Z, R_W, R_X, R_Y \cond Z,W,X. 
\end{align*}
Thus, a directly factorizable imputation method draws imputations in the ordering $Z \prec W \prec X \prec Y$ from the following distributions:
\begin{align}
& \textrm{If } R_Z=0, \textrm{impute } Z \textrm{ from } p(Z \cond R_Z=1). \label{eq:impZ} \\
& \textrm{If } R_Z R_W=0, \textrm{impute } W \textrm{ from } p(W \cond Z, R_Z R_W=1). \label{eq:impW} \\
& \textrm{If } R_Z R_W R_X=0, \textrm{impute } X \textrm{ from } p(X \cond Z,W, R_Z R_W R_X=1). \label{eq:impX}\\
& \textrm{If } R_Z R_W R_X R_Y=0,  \textrm{impute } Y \textrm{ from } p(Y \cond Z,W,X, R_Z R_W R_X R_Y=1) \label{eq:impY}.
\end{align}
From the graph of Figure~\ref{fig:4varsfactorizes}, we also obtain the conditional independence restrictions $X \indep Z \cond W$ and $Y \indep W \cond X,Z$ that can be used to simplify the imputation models for $X$ and $Y$, respectively.

In the graph of Figure~\ref{fig:4varsweightingmi1}, an ordering required for the directly factorizable imputation does not exist. However, we can apply Theorem~\ref{thm:factorizable_extended} for the partition $(\{X,W\},\{Z,Y\})$. To see this, we need to check that the conditions of Theorem~\ref{thm:factorizable_extended} are fulfilled. From the graph we see that $Z,Y \indep R_X,R_W \cond X,W$ which is the requirement of condition (a) of Theorem~\ref{thm:factorizable_extended}. The identifiability of $p(R_{XW}=1 \cond X,W)$ from $p(X^*,W^*,R_X,R_W)$ can be confirmed by applying Do-search, which fulfills the condition (b) of Theorem~\ref{thm:factorizable_extended}. This implies that we can apply conditionally complete imputation for $X$ and $W$ and use then the imputed data to estimate the probabilities $p(R_{XW}=1 \cond X,W)$. Finally, condition (c) of Theorem~\ref{thm:factorizable_extended} holds for the ordering $Y \prec Z$ because  $Y \indep R_Y \cond X,W,R_X,R_W$ and $Z \indep R_Z,R_Y \cond Y,X,W,R_X,R_W$. This means that a conditionally factorizable imputation method draws imputation for the data where $R_X R_W = 1$ from the following distributions
\begin{align}
& \textrm{If } R_Y=0, \textrm{impute } Y \textrm{ from } p(Y \cond X,W,R_X R_W R_Y=1). \label{eq:impY_2b} \\
& \textrm{If } R_Y R_Z=0, \textrm{impute } Z \textrm{ from } p(Z \cond X,W,Y, R_X R_W R_Y R_Z=1). \label{eq:impZ_2b}
\end{align}

The reasoning is similar for Figure~\ref{fig:4varsweightingmi2}. This time, Theorem~\ref{thm:factorizable_extended} applies for the partition $(\{W,Z\},\{X,Y\})$ and the ordering is $X \prec Y$. The full law $p(W,Z,R_W,R_Z)$ is identifiable from $p(W^*,Z^*,R_W,R_Z)$ licensing the use of conditionally complete imputation in the weight estimation. The conditions  $X \indep R_X \cond W,Z,R_W,R_Z$ and $Y \indep R_Y,R_X \cond X,W,Z,R_W,R_Z$ hold permitting the factorizable imputation of $X$ and $Y$ given $W$ and $Z$ when $R_{WZ}=1$. Thus, a conditionally factorizable imputation method draws imputation for the data where $R_W R_Z = 1$ from the following distributions
\begin{align}
& \textrm{If } R_X=0, \textrm{impute } X \textrm{ from } p(X \cond W,Z,R_W R_Z R_X=1). \label{eq:impX_2c} \\
& \textrm{If } R_X R_Y=0, \textrm{impute } Y \textrm{ from } p(Y \cond W,Z,XR_W R_Z R_X R_Y=1). \label{eq:impY_2c}
\end{align}

The graph of Figure~\ref{fig:4varsdonotfactorize} exemplifies a case where neither Theorem~\ref{thm:factorizable_valid} nor Theorem~\ref{thm:factorizable_extended} is not applicable but the target law is still identifiable. The graph is similar to the graph of Figure~\ref{fig:4varsfactorizes} but there is an additional edge $X \rightarrow R_Z$. 
In this scenario, it is possible to derive multiple identifying functionals for the target law. We consider two of such functionals. The first functional is obtained using Do-search and it assigns weights for the complete observations
\begin{equation} \label{eq:4varsb_allprobs}
  \begin{aligned}
  & p(X,W,Z,Y) 
  = \frac{p(X, W, Z, Y, R_X = 1, R_W = 1, R_Z = 1, R_Y = 1)}{p(R_X = 1|W,R_W = 1)p(R_W = 1|Z,R_Z = 1)p(R_Z = 1,R_Y = 1|X,R_X = 1)}. 
\end{aligned}
\end{equation}
The second functional is obtained from the first one by manual derivation
\begin{equation} \label{eq:4varsb_rxprob}
  \begin{aligned}
  & p(X,W,Z,Y) 
  = \frac{p(R_X=1)p(X|R_X = 1)p(Z|X,R_X R_Z = 1)p(W,Y|X,Z,R_X R_W R_Z R_Y = 1)}{p(R_X = 1|W,R_W = 1)}. 
\end{aligned}
\end{equation}

In Examples~\ref{fig:4varsfactorizes} -- \ref{fig:4varsdonotfactorize}, we simulate one million observations from a linear Gaussian model constructed according to the graphs of Figure~\ref{fig:4varsfactorizes} -- \ref{fig:4varsdonotfactorize}, respectively. The detailed description of the simulation models is presented in Appendix~\ref{apx:fourvar}. The observed data are a random sample from $p(X^\ast,W^\ast,Z^\ast,Y^\ast,R_X,R_W,R_Z,R_Y)$.

The methods in comparison are the same as in Example~\ref{fig:2vars} with the addition of conditionally factorizable imputation. MI uses the partially observed variables $X$, $W$, $Z$ and $Y$ to impute each other. MIRI uses imputation models where these variables have interactions with the response indicators. CCA and ACA are included as well.

FMI operates differently in each example. In 
Example~\ref{fig:4varsfactorizes}, FMI uses modified data where $W^\ast$ is set to be missing if $R_Z=0$, $X^\ast$ is set to be missing if $R_Z=0$ or $R_W=0$, and $Y^\ast$ is set to be missing if $R_Z=0$ or $R_W=0$ or $R_X=0$. The imputation models are specified according to equations~\eqref{eq:impZ}, \eqref{eq:impW}, \eqref{eq:impX} and \eqref{eq:impY}.

In Example~\ref{fig:4varsweightingmi1}, FMI uses first MIRI to obtain imputed data on $p(X,W,R_X,R_W)$. Then a generalized additive model is fitted to estimate probabilities $p(R_X R_W = 1 \cond X,W)$ whose inverses will be the analysis weights. The data are restricted to the subset where $R_X R_W = 1$ and $Y$ and $Z$ are imputed according to equations~\eqref{eq:impY_2b} and \eqref{eq:impZ_2b}, respectively.

In Example~\ref{fig:4varsweightingmi2}, FMI uses first MIRI to obtain imputed data on $p(W,Z,R_W,R_Z)$. Then a generalized additive model is fitted to estimate probabilities $p(R_W R_Z = 1 \cond W,Z)$ whose inverses will be the analysis weights. The data are restricted to the subset where $R_W R_Z = 1$ and $X$ and $Y$ are imputed according to equations~\eqref{eq:impX_2c} and \eqref{eq:impY_2c}, respectively.

In Example~\ref{fig:4varsdonotfactorize}, the  conditions required for FMI are not met for any partition. However, for the comparison, FMI is applied in same way as in Example~\ref{fig:4varsfactorizes}. The identifying functionals of equations~\eqref{eq:4varsb_allprobs} and \eqref{eq:4varsb_rxprob} are applied in estimation in Example~\ref{fig:4varsdonotfactorize}.

From Table~\ref{tab:simulation_resultsXWZY}, we can see that the estimates are unbiased for FMI and biased for the other methods in Examples~\ref{fig:4varsfactorizes}, \ref{fig:4varsweightingmi1} and \ref{fig:4varsweightingmi2}. In Example~\ref{fig:4varsdonotfactorize}, FMI also provides biased results as expected but estimation using equations~\eqref{eq:4varsb_allprobs} and \eqref{eq:4varsb_rxprob} gives unbiased or nearly unbiased estimates.  The same conclusions hold for standard deviations and correlations which are reported in Tables~\ref{tab:simulation_resultsXWZY_sdcor_abc} and \ref{tab:simulation_resultsXWZY_sdcor_d} in Appendix. The simulation results are thus in agreement with Theorems~\ref{thm:fulllawmi}, \ref{thm:factorizable_valid} and \ref{thm:factorizable_extended}.

\begin{table}[ht]
\centering
\caption{Simulation results for the examples illustrated in Figure~\ref{fig:4vars}. The biases are reported for factorizable multiple imputation (FMI), standard multiple imputation without response indicators (MI), multiple imputation with response indicators (MIRI),  complete case analysis (CCA) and available case analysis (ACA), and in the case of Example~\ref{fig:4varsdonotfactorize}, for estimation using the referred equations (Eq.).
} \label{tab:simulation_resultsXWZY}
\begin{tabular}{clrrrrr}
\toprule
 & & \multicolumn{5}{c}{Bias} \\
Example & Statistic & FMI & MI & MIRI &  CCA & ACA \\
\midrule
\multirow{4}{*}{2a} & $\E(X)=0$  &  $0.00$  &  $0.09$  &  $0.00$  &  $0.72$  &  $0.29$ \\ 
 & $\E(W)=0$  &  $0.00$  &  $-0.01$  &  $-0.06$  &  $0.78$  &  $0.11$ \\ 
 & $\E(Z)=0$  &  $0.00$  &  $-0.01$  &  $-0.04$  &  $0.69$  &  $-0.00$ \\ 
 & $\E(Y)=0$  &  $0.00$  &  $0.08$  &  $0.00$  &  $0.71$  &  $0.31$ \\
 \midrule 
\multirow{4}{*}{2b} & $\E(X)=0$  &  $-0.00$  &  $-0.03$  &  $-0.11$  &  $0.69$  &  $-0.25$ \\ 
 & $\E(W)=0$  &  $-0.00$  &  $0.04$  &  $0.03$  &  $0.45$  &  $0.25$ \\ 
 & $\E(Z)=0$  &  $-0.00$  &  $0.05$  &  $0.02$  &  $0.57$  &  $0.42$ \\ 
 & $\E(Y)=0$  &  $-0.00$  &  $-0.01$  &  $-0.05$  &  $0.62$  &  $0.01$ \\
 \midrule 
\multirow{4}{*}{2c} & $\E(X)=0$  &  $0.00$  &  $0.14$  &  $-0.07$  &  $0.81$  &  $0.39$ \\ 
 & $\E(W)=0$  &  $0.01$  &  $0.13$  &  $-0.09$  &  $0.92$  &  $0.28$ \\ 
 & $\E(Z)=0$  &  $0.00$  &  $0.14$  &  $-0.09$  &  $0.93$  &  $0.28$ \\ 
 & $\E(Y)=0$  &  $0.01$  &  $0.17$  &  $-0.15$  &  $0.90$  &  $0.54$  \\
 \multicolumn{7}{c}{}
\end{tabular}
\begin{tabular}{lrrrrrrrr}
Example 2d & \multicolumn{7}{c}{Bias} \\
Statistic  & FMI & MI & MIRI &  CCA & ACA & Eq.~\eqref{eq:4varsb_allprobs} & Eq.~\eqref{eq:4varsb_rxprob}\\
\midrule
 $\E(X)=0$  &  $ 0.41$  &  $ 0.10$  &  0.00  &  $ 0.95$  &  $ 0.29$  &  $-0.01$  &  $-0.01$ \\ 
 $\E(W)=0$  &  $ 0.29$  &  $-0.03$  &  -0.09  &  $ 0.92$  &  $ 0.03$  &  $-0.00$  &  $-0.01$ \\ 
 $\E(Z)=0$  &  $ 0.21$  &  $ 0.00$  &  -0.06  &  $ 0.78$  &  $ 0.21$  &  $-0.02$  &  $-0.00$ \\ 
 $\E(Y)=0$  &  $ 0.31$  &  $ 0.10$  &  0.00  &  $ 0.87$  &  $ 0.31$  &  $-0.02$  &  $-0.01$ \\
 \bottomrule
\end{tabular}
\end{table}

\section{Conclusion} \label{sec:conclusion}

We have studied the application of multiple imputation in graphical missing data models. First we clarified the connection between full law identifiability and standard multiple imputation. Theorem~\ref{thm:fulllawmi} ties the identifiability of the full law and the applicability of standard multiple imputation together. In practice, an analyst can first check if the full law is identifiable in the assumed missing data model. If this holds true, a conditionally complete imputation method exists and standard multiple imputation could be applied. If this turns out to be false, standard multiple imputation is likely to lead to biased estimates because it is theoretically not possible to devise a conditionally complete imputation method. The presented simulation examples are in agreement with Theorem~\ref{thm:fulllawmi}. Standard multiple imputation provided unbiased estimates only in the cases where the full law is identified. However, this conclusion holds only if the imputation models include the response indicators. 

We introduced factorizable imputation that can be used in some cases when the full law is not identifiable. Factorizable imputation is a non-standard multiple imputation method in the sense that certain observed values are also imputed and weights are used in the analysis of the imputed data. Examples~\ref{fig:nocolluder}, \ref{fig:4varsfactorizes}, \ref{fig:4varsweightingmi1} and \ref{fig:4varsweightingmi2} demonstrate cases where the full law is not identified but factorizable imputation leads to unbiased estimates. These results are in agreement with Theorems~\ref{thm:factorizable_valid} and \ref{thm:factorizable_extended}. We studied only large-sample estimation. Investigating the small-sample properties of different estimation methods is a natural next step for  future research. 

There are also cases where the target law is identifiable but neither standard multiple imputation nor factorizable imputation is applicable. In these cases, the target law can be still estimated using the identifying functionals as Examples~\ref{fig:XYindependent} and \ref{fig:4varsdonotfactorize} demonstrate. 

There is a growing understanding that the MCAR / MAR / MNAR classification could be replaced by more detailed non-parametric assumptions which are often expressed in the form of graphical models \citep{lee2023assumptions}. Clarifying the link between multiple imputation and the identifiability of the full law and the target law helps researchers in choosing the right tool for missing data analysis in this context.

\bibliographystyle{tmlr}
\bibliography{bibliography}

\appendix
\section{Detailed description of simulation setup and simulation results} \label{apx:fourvar}

The data for Examples~\ref{fig:4varsfactorizes}--\ref{fig:4varsdonotfactorize} are simulated as follows. Variables $X$, $W$, $Z$ and $Y$ follow multivariate normal distribution with zero means. The standard deviations and pairwise correlation coefficients can be read from the column ``Truth'' in Tables~\ref{tab:simulation_resultsXWZY_sdcor_abc} and \ref{tab:simulation_resultsXWZY_sdcor_d}. Response indicators without parents are simulated from a Bernoulli distribution where the probability of $1$ is $0.7$. Response indicators with one parent (always $X$, $W$, $Z$ or $Y$ in our examples) are simulated from a logistic regression model. Logistic regression is used also when a response indicator has multiple parents which may be both data variables and other response indicators. The linear predictor then contains high order interaction terms in addition to linear terms. The exact parametrization varies example by example and can be seen from the simulation code. 

Table~\ref{tab:simulation_resultsXWZY_sdcor_abc} presents additional simulation results for Examples~\ref{fig:4varsfactorizes}, \ref{fig:4varsweightingmi1} and \ref{fig:4varsweightingmi2}. It can be seen that  estimated standard deviations and correlations are unbiased for factorizable imputation and biased for other approaches.
Table~\ref{tab:simulation_resultsXWZY_sdcor_d} shows additional simulation results for Example~\ref{fig:4varsdonotfactorize}. The estimated standard deviations and correlations are biased for all methods except for the estimation with identifying functionals.  

\begin{table}[ht]
\centering
\caption{Simulation results on standard deviations and correlations for the examples illustrated in Figures~\ref{fig:4varsfactorizes}, \ref{fig:4varsweightingmi1} and \ref{fig:4varsweightingmi2}. The biases are reported for  factorizable multiple imputation (FMI), standard multiple imputation without response indicators (MI), multiple imputation with response indicators (MIRI), complete case analysis (CCA) and available case analysis (ACA).
} \label{tab:simulation_resultsXWZY_sdcor_abc}
\begin{tabular}{clrrrrrrr}
\toprule
  &  & & \multicolumn{5}{c}{Bias} \\
Example & Statistic & Truth & FMI & MI & MIRI &  CCA & ACA \\
\midrule
 \multirow{10}{*}{2a} & $\sd(X)$  &  $1.00$  &  $-0.00$  &  $-0.03$  &  $-0.00$  &  $-0.13$  &  $-0.04$ \\ 
 & $\sd(W)$  &  $1.00$  &  $-0.00$  &  $ 0.00$  &  $-0.01$  &  $-0.14$  &  $-0.01$ \\ 
 &  $\sd(Z)$  &  $1.00$  &  $-0.00$  &  $ 0.00$  &  $-0.02$  &  $-0.12$  &  $-0.00$ \\ 
 & $\sd(Y)$  &  $1.00$  &  $-0.00$  &  $-0.04$  &  $-0.00$  &  $-0.12$  &  $-0.05$ \\ 
 & $\Cor(X,W)$  &  $0.71$  &  $-0.00$  &  $-0.03$  &  $-0.01$  &  $-0.08$  &  $-0.04$ \\ 
 & $\Cor(X,Z)$  &  $0.50$  &  $-0.00$  &  $-0.03$  &  $-0.01$  &  $-0.10$  &  $-0.05$ \\ 
 & $\Cor(X,Y)$  &  $0.75$  &  $-0.00$  &  $-0.02$  &  $-0.00$  &  $-0.06$  &  $-0.05$ \\ 
 & $\Cor(W,Z)$  &  $0.71$  &  $-0.00$  &  $ 0.00$  &  $-0.01$  &  $-0.07$  &  $-0.03$ \\ 
 & $\Cor(W,Y)$  &  $0.71$  &  $-0.00$  &  $-0.02$  &  $-0.01$  &  $-0.08$  &  $-0.04$ \\ 
 & $\Cor(Z,Y)$  &  $0.75$  &  $-0.00$  &  $-0.01$  &  $-0.02$  &  $-0.06$  &  $-0.01$ \\ 
 \midrule
 \multirow{10}{*}{2b} & $\sd(X)$  &  $1.00$  &  $-0.00$  &  $-0.00$  &  $ 0.02$  &  $-0.26$  &  $-0.04$ \\ 
 &  $\sd(W)$  &  $1.00$  &  $-0.00$  &  $-0.01$  &  $-0.03$  &  $-0.30$  &  $-0.04$ \\ 
 & $\sd(Z)$  &  $1.00$  &  $ 0.00$  &  $-0.02$  &  $-0.02$  &  $-0.25$  &  $-0.09$ \\ 
 & $\sd(Y)$  &  $1.00$  &  $-0.00$  &  $-0.02$  &  $-0.04$  &  $-0.31$  &  $-0.12$ \\ 
 & $\Cor(X,W)$  &  $0.71$  &  $-0.00$  &  $-0.04$  &  $-0.07$  &  $-0.22$  &  $-0.07$ \\ 
 & $\Cor(X,Z)$  &  $0.81$  &  $ 0.00$  &  $-0.02$  &  $-0.06$  &  $-0.14$  &  $-0.04$ \\ 
 & $\Cor(X,Y)$  &  $0.86$  &  $-0.00$  &  $-0.01$  &  $-0.03$  &  $-0.11$  &  $-0.04$ \\ 
 & $\Cor(W,Z)$  &  $0.81$  &  $-0.00$  &  $-0.01$  &  $-0.02$  &  $-0.16$  &  $-0.05$ \\ 
 & $\Cor(W,Y)$  &  $0.86$  &  $-0.00$  &  $-0.01$  &  $-0.03$  &  $-0.15$  &  $-0.06$ \\ 
 & $\Cor(Z,Y)$  &  $0.89$  &  $ 0.00$  &  $-0.01$  &  $-0.01$  &  $-0.09$  &  $-0.04$ \\ 
 \midrule
 \multirow{10}{*}{2c} &  $\sd(X)$  &  $1.00$  &  $-0.00$  &  $-0.03$  &  $ 0.09$  &  $-0.27$  &  $-0.07$ \\ 
 & $\sd(W)$  &  $1.00$  &  $-0.00$  &  $-0.03$  &  $ 0.11$  &  $-0.32$  &  $-0.05$ \\ 
 & $\sd(Z)$  &  $1.00$  &  $-0.00$  &  $-0.04$  &  $ 0.12$  &  $-0.35$  &  $-0.05$ \\ 
 & $\sd(Y)$  &  $1.00$  &  $-0.00$  &  $-0.05$  &  $ 0.25$  &  $-0.33$  &  $-0.17$ \\ 
 & $\Cor(X,W)$  &  $0.71$  &  $-0.00$  &  $-0.02$  &  $ 0.04$  &  $-0.30$  &  $-0.07$ \\ 
 & $\Cor(X,Z)$  &  $0.81$  &  $-0.00$  &  $-0.01$  &  $ 0.03$  &  $-0.19$  &  $-0.04$ \\ 
 & $\Cor(X,Y)$  &  $0.86$  &  $-0.00$  &  $-0.01$  &  $ 0.02$  &  $-0.14$  &  $-0.13$ \\ 
 & $\Cor(W,Z)$  &  $0.81$  &  $-0.00$  &  $-0.01$  &  $ 0.03$  &  $-0.24$  &  $-0.05$ \\ 
 & $\Cor(W,Y)$  &  $0.86$  &  $-0.00$  &  $-0.01$  &  $ 0.02$  &  $-0.17$  &  $-0.12$ \\ 
 & $\Cor(Z,Y)$  &  $0.89$  &  $-0.00$  &  $-0.01$  &  $ 0.02$  &  $-0.14$  &  $-0.10$  \\
\end{tabular}
\end{table}

\begin{table}[ht]
\centering
\caption{Simulation results on standard deviations and correlations for the example illustrated in Figure~\ref{fig:4varsdonotfactorize}. The biases are reported for  factorizable multiple imputation (FMI), standard multiple imputation without response indicators (MI), multiple imputation with response indicators (MIRI), complete case analysis (CCA) and available case analysis (ACA), and for estimation using the referred equations (Eq.).
} \label{tab:simulation_resultsXWZY_sdcor_d}
\begin{tabular}{lrrrrrrrr}
 & & \multicolumn{7}{c}{Bias} \\
Statistic & Truth & FMI & MI & MIRI &  CCA & ACA & Eq.~\eqref{eq:4varsb_allprobs} & Eq.~\eqref{eq:4varsb_rxprob}\\
\midrule
 $\sd(X)$  &  $1.00$  &  $-0.09$  &  $-0.03$  &  -0.00  &  $-0.18$  &  $-0.04$  &  $ 0.00$  &  $ 0.00$ \\ 
 $\sd(W)$  &  $1.00$  &  $-0.04$  &  $ 0.03$  &  0.02  &  $-0.16$  &  $ 0.06$  &  $-0.01$  &  $ 0.00$ \\ 
 $\sd(Z)$  &  $1.00$  &  $-0.02$  &  $-0.00$  &  -0.03  &  $-0.13$  &  $-0.02$  &  $ 0.01$  &  $-0.00$ \\ 
 $\sd(Y)$  &  $1.00$  &  $-0.05$  &  $-0.03$  &  -0.00  &  $-0.14$  &  $-0.05$  &  $ 0.02$  &  $ 0.00$ \\ 
 $\Cor(X,W)$  &  $0.71$  &  $-0.03$  &  $-0.00$  &  0.01  &  $-0.09$  &  $ 0.00$  &  $-0.00$  &  $ 0.00$ \\ 
 $\Cor(X,Z)$  &  $0.50$  &  $-0.03$  &  $ 0.00$  &  0.00  &  $-0.12$  &  $-0.08$  &  $ 0.02$  &  $ 0.00$ \\ 
 $\Cor(X,Y)$  &  $0.75$  &  $-0.03$  &  $-0.01$  &  -0.00  &  $-0.08$  &  $-0.05$  &  $ 0.01$  &  $ 0.00$ \\ 
 $\Cor(W,Z)$  &  $0.71$  &  $-0.02$  &  $ 0.02$  &  -0.00  &  $-0.08$  &  $-0.05$  &  $ 0.00$  &  $ 0.00$ \\ 
 $\Cor(W,Y)$  &  $0.71$  &  $-0.03$  &  $-0.01$  &  -0.01  &  $-0.10$  &  $-0.00$  &  $-0.00$  &  $ 0.00$ \\ 
 $\Cor(Z,Y)$  &  $0.75$  &  $-0.01$  &  $-0.00$  &  -0.02  &  $-0.07$  &  $-0.02$  &  $ 0.00$  &  $-0.00$  \\
 \bottomrule
\end{tabular}
\end{table}

\end{document}